\theoremstyle{plain}
\newtheorem{theorem}{Theorem}
\newtheorem*{theorem*}{Theorem}
\newtheorem{lemma}{Lemma}
\newtheorem*{lemma*}{Lemma}
\theoremstyle{definition}
\newtheorem{definition}{Definition}
\newtheorem*{definition*}{Definition}
\theoremstyle{remark}
\newtheorem{remark}{Remark}
\newtheorem*{remark*}{Remark}
\newtheorem{statement}{Statement}
\newtheorem*{statement*}{Statement}
\begin{document}
\title[One distribution function on the Moran sets]{One distribution function on the Moran sets}
\author{Symon Serbenyuk}

\address{
  45~Shchukina St. \\
  Vinnytsia \\
  21012 \\
  Ukraine}
\email{simon6@ukr.net}

\subjclass[2010]{11K55, 11J72, 28A80,  26A09}

\keywords{
s-adic representation,   Moran set, Hausdorff dimension, monotonic function, distribution function.}

\begin{abstract}

In the present article, topological, metric, and fractal properties of certain sets are investigated. These sets are images of sets  whose  elements have restrictions on using  digits or combinations of digits in own s-adic representations, under the map $f$, that is a certain distribution function.

\end{abstract}
\maketitle


\section{Introduction}

Let us consider space $\mathbb R^n$. In \cite{Moran1946}, P. A. P. Moran introduced the following construction of sets and calculated the Hausdorff dimension of the limit set 
\begin{equation}
\label{eq: Cantor-like set}
E=\bigcap^{\infty} _{n=1}{\bigcup_{i_1,\dots , i_n\in A_{0,p}}{\Delta_{i_1i_2\ldots i_n}}}. 
\end{equation}
 Here $p$ is a fixed positive integer, $A_{0,p}=\{1, 2, \dots , p\}$, and sets $\Delta_{i_1i_2\ldots i_n}$ are basic sets  having  the following properties:
\begin{itemize}
\item any set $\Delta_{i_1i_2\ldots i_n}$ is closed and disjoint;
\item for any $i\in A_{0,p}$ the condition $\Delta_{i_1i_2\ldots i_ni}\subset\Delta_{i_1i_2\ldots i_n}$ holds;
\item 
$$
\lim_{n\to\infty}{d\left(\Delta_{i_1i_2\ldots i_n}\right)}=0, \text{where $d(\cdot)$ is the diameter of a set};
$$
\item each basic set is the closure of its interior;
\item at each level the basic sets do not overlap (their interiors are disjoint);
\item any basic set $\Delta_{i_1i_2\ldots i_ni}$ is geometrically similar to $\Delta_{i_1i_2\ldots i_n}$;
\item
$$
\frac{d\left(\Delta_{i_1i_2\ldots i_ni}\right)}{d\left(\Delta_{i_1i_2\ldots i_n}\right)}=\sigma_i,
$$
where $\sigma_i\in (0,1)$ for $i=\overline{1,p}$.
\end{itemize}

The Hausdorff dimension $\alpha_0$ of the set $E$ is the unique root of the following equation
$$
\sum^{p} _{i=1}{\sigma^{\alpha_0} _i}=1.
$$

It is easy to see that set \eqref{eq: Cantor-like set} is a Cantor-like set and a self-similar fractal. The set $E$  is called \emph{the Moran set}.

Much research has been devoted to Moran-like constructions and Cantor-like sets (for example, see \cite{{Falconer1997},{Falconer2004}, {Mandelbrot1977},  {PS1995}, DU2014, DU2014(2),  HRW2000, PS1995,  {S.Serbenyuk 2017}, {S. Serbenyuk fractals}} and references therein).

Fractal sets are widely applicated  in computer design, algorithms of the compression to information, quantum mechanics, solid-state physics, analysis and categorizations of signals of  various forms appearing in different areas (e.g. the analysis of exchange rate fluctuations in economics),  etc. In addition, such sets are useful for checking of preserving the Hausdorff dimension by certain functions \cite{{S. Serbenyuk abstract1},{S.Serbenyuk 2017}}. However, for much classes of fractals the problem of the Hausdorff dimension calculation is difficult and the estimate of parameters  on which the Hausdorff  dimension of certain classes of fractal sets depends is left out of consideration. 

Let $s>1$ be a fixed positive integer. Let us consider the s-adic representation of numbers from~$[0,1]$:
$$
x=\Delta^s _{\alpha_1\alpha_2...\alpha_n...}=\sum^{\infty} _{n=1}{\frac{\alpha_n}{s^n}},
$$
where $\alpha_n\in A=\{0,1,\dots, s-1\}$.

In addition, we say that the following representation
$$
x=\Delta^{-s }_{\alpha_1\alpha_2...\alpha_n...}=\sum^{\infty} _{n=1}{\frac{\alpha_n}{(-s)^n}},
$$
is the nega-s-adic representation of numbers from $\left[-\frac{s}{s+1}, \frac{1}{s+1}\right]$. Here $\alpha_n\in A$ as well.

Some articles (see \cite{ DU2014, DU2014(2), {S. Serbenyuk fractals},{S. Serbenyuk abstract 2}, {S. Serbenyuk abstract 3},{S. Serbenyuk abstract 5}, {Symon1}, {Symon2}, {S. Serbenyuk 2013}, {S. Serbenyuk 2017  fractals}} )  were devoted to   sets whose elements have certain 
restrictions on using combinations of digits in own s-adic  representation. Let us consider the following results.

Suppose  $s>2$ be a fixed positive integer.

Let us consider a class  $\Upsilon_s$ of sets  $\mathbb  S_{(s,u)}$ represented  in the form 
\begin{equation*}
\mathbb S_{(s,u)}= \left\{x: x=\frac{u}{s-1} +\sum^{\infty} _{n=1} {\frac{\alpha_n - u}{s^{\alpha_1+\dots+\alpha_n}}}, (\alpha_n) \in L, \alpha_n \ne u, \alpha_n \ne 0  \right\},
\end{equation*}
where $u=\overline{0,s-1}$, the parameters $u$ and $s$ are fixed for the set $\mathbb  S_{(s,u)}$. That is the class $\Upsilon_s$ contains the sets  $\mathbb  S_{(s,0)}, \mathbb  S_{(s,1)},\dots,\mathbb  S_{(s,s-1)}$. We say that   $\Upsilon$ is a class of sets such that contains the classes   $\Upsilon_3, \Upsilon_4,\dots ,\Upsilon_n,\dots$.

It is easy to see that the set  $\mathbb  S_{(s,u)}$ can be defined by the s-adic representation in the following form
\begin{equation*}
\mathbb S_{(s,u)}=\left\{x: x= \Delta^{s}_{{\underbrace{u\ldots u}_{\alpha_1-1}} \alpha_1{\underbrace{u\ldots u}_{\alpha_2 -1}}\alpha_2 ...{\underbrace{u\ldots u}_{ \alpha_n -1}}\alpha_n...},  (\alpha_n) \in L, \alpha_n \ne u, \alpha_n \ne 0 \right\}, 
\end{equation*}

\begin{theorem}[\cite{{Symon2}, {S. Serbenyuk 2017  fractals}, {S. Serbenyuk   fractals}}]
\label{th: theorem1}
For  an arbitrary $u \in A$ the set $\mathbb S_{(s,u)}$ is an uncountable,   perfect,   nowhere dense set of zero Lebesgue measure, and a self-similar fractal whose Hausdorff dimension $\alpha_0 (\mathbb S_{(s,u)})$ satisfies the following equation 
$$
\sum _{p \ne u, p \in A_0} {\left(\frac{1}{s}\right)^{p \alpha_0}}=1.
$$
\end{theorem}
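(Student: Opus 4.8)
The plan is to realize $\mathbb S_{(s,u)}$ as the attractor of an iterated function system of similarities and then to read off every asserted property from the Moran construction recalled in the Introduction. Write $D=\{p\in\{1,\dots,s-1\}:p\ne u\}$ for the admissible digit set, and observe that each admissible ``letter'' $p\in D$ corresponds to prepending the block $\underbrace{u\cdots u}_{p-1}p$ of length $p$ to an $s$-adic expansion. On $\mathbb R$ this prepending is the affine map $f_p(x)=s^{-p}x+c_p$ with contraction ratio $\sigma_p=s^{-p}$, where $c_p$ is the number whose expansion is that block followed by all $u$'s. Each $f_p$ is a similarity, so $\{f_p\}_{p\in D}$ is an IFS of similarities, and a direct check on the defining series shows that $\mathbb S_{(s,u)}$ is exactly its invariant set; in particular $\mathbb S_{(s,u)}$ is a self-similar fractal.

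First I would set up the basic sets $\Delta_{p_1\dots p_n}=f_{p_1}\circ\cdots\circ f_{p_n}([0,1])$ and verify the Moran hypotheses attached to \eqref{eq: Cantor-like set}: each $\Delta_{p_1\dots p_n}$ is a closed interval, $\Delta_{p_1\dots p_n p}\subset\Delta_{p_1\dots p_n}$, the ratio of successive diameters equals $\sigma_p=s^{-p}$, and $d(\Delta_{p_1\dots p_n})=s^{-(p_1+\dots+p_n)}\to0$. The one point that needs care is the separation: distinct admissible blocks must produce disjoint cylinders. This is where the hypotheses $\alpha_n\ne u$ and $\alpha_n\ne0$ do the work, since the terminal digit of every block differs from the padding symbol $u$, so the position of the first non-$u$ digit recovers the length of the first block and hence $p_1$ unambiguously; inductively the coding $(p_n)\mapsto x$ is injective and the level-$n$ cylinders are pairwise disjoint, giving the open set condition. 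Once this is established, Moran's theorem yields that the Hausdorff dimension $\alpha_0(\mathbb S_{(s,u)})$ is the unique root of $\sum_{p\in D}\sigma_p^{\alpha_0}=1$, i.e.\ of $\sum_{p\ne u,\,p\in A_0}(1/s)^{p\alpha_0}=1$.

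The remaining topological and metric assertions then follow from the coding. Since $D$ contains at least two digits, the map $(p_n)\mapsto x$ embeds $D^{\mathbb N}$ homeomorphically into $\mathbb S_{(s,u)}$, so the set has the cardinality of the continuum and is uncountable; it is compact, being the attractor of the IFS, and it has no isolated points because any admissible sequence can be perturbed arbitrarily far out in its tail to give distinct points arbitrarily close by, so $\mathbb S_{(s,u)}$ is perfect. For the measure, the level-$n$ cylinders cover $\mathbb S_{(s,u)}$ and their total length is $\big(\sum_{p\in D}s^{-p}\big)^{n}$; since $\sum_{p\in D}s^{-p}<\sum_{p=1}^{\infty}s^{-p}=\frac{1}{s-1}<1$ for $s>2$, this tends to $0$, whence $\mathbb S_{(s,u)}$ is Lebesgue-null and, being closed with empty interior, nowhere dense.

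The step I expect to be the genuine obstacle is the separation/injectivity argument: because of the intrinsic non-uniqueness of $s$-adic representations (the identifications $\ldots a(s-1)(s-1)\ldots=\ldots(a+1)00\ldots$) one must confirm that admissible expansions never collide, and that the $u$-padding inside one block is never mistaken for the start of the next. Verifying this cleanly, rather than the dimension computation, which is then automatic from Moran's theorem, is the crux of the proof.
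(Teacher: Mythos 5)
The paper never actually proves Theorem~\ref{th: theorem1}: it is quoted from the cited earlier work, and what the paper proves in Section~2 are the direct generalizations for the $P$-representation (the $s$-adic case being $p_0=\dots=p_{s-1}=1/s$). Measured against those proofs, your argument is correct in substance but takes a genuinely different, cleaner route. You package the construction as the attractor of the IFS $\{f_p\}_{p\in D}$, verify the Moran hypotheses from the introduction, and read the dimension off the equation $\sum_{p\in D}\sigma_p^{\alpha_0}=1$ with $\sigma_p=s^{-p}$; nowhere density then comes for free from ``closed and Lebesgue-null,'' and uncountability from the embedding of $D^{\mathbb N}$. The paper instead works by hand with explicit cylinder endpoints (the analogue of Lemma~\ref{lm: Lemma on cylinders}): it computes $\inf$ and $\sup$ of each cylinder in closed form, proves the ordering/gap inequalities of Property~5 case by case, deduces nowhere density from those gaps, gets measure zero from the recursion $\lambda(E_k)=\gamma_0^{k}d_0$, proves uncountability by a bijection onto an $s$-adic set, and for the dimension asserts that a compact self-similar subset of $\mathbb R^1$ has Hausdorff dimension equal to its similarity dimension. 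Your version buys brevity and makes the logical dependence on separation explicit --- you correctly identify injectivity/non-overlap of the admissible codings as the crux, a point the paper's dimension proof glosses over but which its gap inequalities in effect supply; the paper's computations buy explicit endpoints and gap sizes that it reuses elsewhere.

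Two small repairs. The translation constant of your similarity should be $c_p=\Delta^{s}_{u\ldots u\, p\, 000\ldots}$ (the block followed by zeros), not the block followed by $u$'s; this is immaterial to the structure of the argument. And the phrase ``since $D$ contains at least two digits'' silently excludes $s=3$ with $u\in\{1,2\}$, where $D$ is a singleton and the set degenerates to a single point; the Remark following Theorem~\ref{th: theorem1} makes exactly this exclusion, so your proof has the same (acknowledged) scope as the stated theorem.
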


\begin{remark}
We note that the statement of the last-mentioned theorem is true for all sets $\mathbb  S_{(s,0)}, \mathbb  S_{(s,1)},\dots,\mathbb  S_{(s,s-1)}$ (for  fixed parameters  $u=\overline{0,s-1}$ and any fixed $2<s\in\mathbb N$ )  without the sets $ S_{(3,1)}$ and $ S_{(3,2)}$. 
\end{remark}

\begin{theorem}[\cite{{Symon2}, {S. Serbenyuk 2017  fractals}, {S. Serbenyuk 2013}, {S. Serbenyuk  fractals}}]
\label{th: theorem2}
Let  $E$ be a set, whose elements contain (in own s-adic or nega-s-adic representation) only 
digits or combinations of digits from a certain fixed finite set $\{\sigma_1, \sigma_2,\dots,\sigma_m\}$ of s-adic digits or combinations of digits. 

Then the Hausdorff  dimension $\alpha_0$ of $E$ satisfies the following equation: 
$$
N(\sigma^1 _m)\left(\frac{1}{s}\right)^{\alpha_0}+N(\sigma^2 _m)\left(\frac{1}{s}\right)^{2\alpha_0}+\dots+N(\sigma^{k} _m)\left(\frac{1}{s}\right)^{k\alpha_0}=1,
$$
where $N(\sigma^k_m)$ is a number of k-digit combinations $\sigma^k_m$ from the set $\{\sigma_1, \sigma_2,\dots,\sigma_m\}$,
$k \in \mathbb N$, and $N(\sigma^1 _m)+N(\sigma^2 _m)+\dots+ N(\sigma^{k} _m)=m$.
\end{theorem}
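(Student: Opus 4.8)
The plan is to realize $E$ as a Moran set in the sense of \eqref{eq: Cantor-like set} and then read off the dimension equation from Moran's theorem quoted in the Introduction. First I would fix the finite collection of admissible blocks $\{\sigma_1,\dots,\sigma_m\}$ and denote by $\ell_i$ the number of (nega-)s-adic digits occupied by the block $\sigma_i$; by hypothesis exactly $N(\sigma^j_m)$ of these blocks have length $\ell_i=j$, and $\sum_j N(\sigma^j_m)=m$. Every element of $E$ is obtained by concatenating such blocks in its expansion, so the natural coding of $E$ is by finite words $w=\sigma_{i_1}\sigma_{i_2}\cdots\sigma_{i_n}$ over the alphabet of blocks.

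Next I would attach to each admissible word $w$ the corresponding s-adic cylinder $\Delta_w$, i.e. the interval of all numbers whose expansion begins with the digit string obtained by writing out $w$. Because a block of length $\ell$ advances the expansion by $\ell$ positions, $\Delta_w$ is an interval of diameter $s^{-(\ell_{i_1}+\cdots+\ell_{i_n})}$, and passing from $\Delta_w$ to a child $\Delta_{w\sigma_i}$ multiplies the diameter by exactly $\sigma_i:=s^{-\ell_i}\in(0,1)$. Writing $\Delta_{i_1\ldots i_n}:=\Delta_w$ I would then verify the Moran axioms one by one: the cylinders are closed, nested ($\Delta_{w\sigma_i}\subset\Delta_w$), geometrically similar to their parents with ratio $s^{-\ell_i}$, and their diameters tend to $0$; the intersection $\bigcap_n\bigcup_{|w|=n}\Delta_w$ coincides with the given set up to a countable set of endpoints carrying two expansions, which does not affect the Hausdorff dimension.

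The key step — and the one I expect to be the main obstacle — is the separation requirement that at each level the basic cylinders have disjoint interiors. This is exactly where the hypothesis that the $\sigma_i$ are admissible \emph{combinations} of digits must be used: the block set has to be uniquely decodable (it suffices that no block is a prefix of another), so that distinct admissible words produce distinct, non-nested digit strings and hence cylinders with disjoint interiors. If, say, both a one-digit block and a longer block beginning with that same digit were allowed, the second cylinder would sit inside the first and the Moran separation would fail. This prefix-freeness does hold for the concrete families of Theorem~\ref{th: theorem1}, where a block $\underbrace{u\cdots u}_{\,c-1}\,c$ is recognized by its terminal non-$u$ digit $c$, which also records its length; once it is established in general, the construction literally satisfies the Moran axioms with contraction ratios $\sigma_i=s^{-\ell_i}$.

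Finally I would invoke Moran's theorem: the Hausdorff dimension $\alpha_0$ of $E$ is the unique root of $\sum_{i=1}^{m}\sigma_i^{\alpha_0}=1$, that is, $\sum_{i=1}^{m}\left(s^{-\ell_i}\right)^{\alpha_0}=1$. Collecting the blocks according to their length $j$ and using that there are $N(\sigma^j_m)$ blocks of length $j$ converts this into
$$
N(\sigma^1_m)\left(\tfrac{1}{s}\right)^{\alpha_0}+N(\sigma^2_m)\left(\tfrac{1}{s}\right)^{2\alpha_0}+\cdots+N(\sigma^k_m)\left(\tfrac{1}{s}\right)^{k\alpha_0}=1,
$$
which is the asserted equation. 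For the nega-s-adic case the same argument applies once I observe that a block of length $\ell$ still contracts the diameter by $|(-s)|^{-\ell}=s^{-\ell}$; only the left-to-right ordering of the children changes, while the separation data and the ratios — hence the dimension equation — are unchanged.
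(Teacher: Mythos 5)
Your proposal is correct and follows essentially the same route as the paper: Theorem~\ref{th: theorem2} is quoted here from the author's earlier work without a written proof, but the paper's final theorem (its $P$-representation analogue) is proved in exactly this way --- coding $E$ by admissible blocks, computing the contraction ratio of each child cylinder (the ratio $\prod_i p_i^{N_i(\tau_j)}$ there specializes to $s^{-\ell_j}$ when every $p_i=1/s$), and reading the dimension off the self-similarity/Moran equation regrouped by block length. If anything you are more careful than the source: your explicit insistence on unique decodability (prefix-freeness) of the block set, which is what guarantees that same-level cylinders have disjoint interiors so that Moran's theorem applies, is left implicit in the paper's argument.
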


Now we will describe the main function of our investigation. 
Let $\eta$ be a random variable, that defined by the s-adic representation
$$
\eta= \frac{\xi_1}{s}+\frac{\xi_2}{s^2}+\frac{\xi_3}{s^3}+\dots +\frac{\xi_{k}}{s^{k}}+\dots   = \Delta^{s} _{\xi_1\xi_2...\xi_{k}...},
$$
where $\xi_k=\alpha_k$
and digits $\xi_k$ $(k=1,2,3, \dots )$ are random and taking the values $0,1,\dots ,s-1$ with positive probabilities ${p}_{0}, {p}_{1}, \dots , {p}_{s-1}$. That is $\xi_k$ are independent and  $P\{\xi_k=i_k\}={p}_{i_k}$, $i_k \in A$.

From the definition of a distribution function and the following expressions 
$$
\{\eta<x\}=\{\xi_1<\alpha_1(x)\}\cup\{\xi_1=\alpha_1(x),\xi_2<\alpha_2(x)\}\cup \ldots 
$$
$$
\cup\{\xi_1=\alpha_1(x),\xi_2=\alpha_2(x),\dots ,\xi_{k-1}=\alpha_{k-1}(x), \xi_{k}<\alpha_{k}(x)\}\cup \dots,
$$
$$
P\{\xi_1=\alpha_1(x),\xi_2=\alpha_2(x),\dots ,\xi_{k-1}=\alpha_{k-1}(x), \xi_{k}<\alpha_{k}(x)\}
=\beta_{\alpha_{k}(x)}\prod^{k-1} _{j=1} {{p}_{\alpha_{j}(x)}},
$$
where
$$
\beta_{\alpha_k}=\begin{cases}
\sum^{\alpha_k(x)-1} _{i=0} {p_{i}(x)}&\text{whenever $\alpha_k(x)>0$}\\
0&\text{whenever  $\alpha_k(x)=0$,}
\end{cases}
$$
it is easy to see that the following statement is true.

\begin{statement}
The distribution function  ${f}_{\eta}$ of the random variable $\eta$ can be represented in the following form
$$
{f}_{\eta}(x)=\begin{cases}
0&\text{whenever $x< 0$}\\
\beta_{\alpha_1(x)}+\sum^{\infty} _{k=2} {\left({\beta}_{\alpha_k(x)} \prod^{k-1} _{j=1} {{p}_{\alpha_j(x)}}\right)}&\text{whenever $0 \le x<1$}\\
1&\text{whenever $x\ge 1$,}
\end{cases}
$$
where ${p}_{\alpha_{j(x)}}>0$.
\end{statement}

The function
$$ 
{f}(x)=\beta_{\alpha_1(x)}+\sum^{\infty} _{n=2} {\left({\beta}_{\alpha_n(x)}\prod^{n-1} _{j=1} {{p}_{\alpha_j(x)}}\right)},
$$
can be used as a representation of numbers from $[0,1]$. That is
$$
x=\Delta^{P} _{\alpha_1(x)\alpha_2(x)...\alpha_n(x)...}=\beta_{\alpha_1(x)}+\sum^{\infty} _{n=2} {\left({\beta}_{\alpha_n(x)}\prod^{n-1} _{j=1} {{p}_{\alpha_j(x)}}\right)},
$$
where $P=\{p_0,p_1,\dots , p_{s-1}\}$, $p_0+p_1+\dots+p_{s-1}=1$, and $p_i>0$ for all $i=\overline{0,s-1}$. The last-mentioned representation is \emph{the P-representation of numbers from $[0,1]$}.

In the present article, we will consider properties of images of the sets considered in Theorem~\ref{th: theorem1} and Theorem~\ref{th: theorem2} under the map $f$.

We begin with definitions.

Let $s$ be a fixed  positive integer,  $s> 2$. 
Let $c_1, c_2,\dots ,c_m$ be an ordered tuple of integers such that $c_i\in\{0,1,\dots ,s-1\}$ for $i=\overline{1,m}$.

\begin{definition} 
{\itshape A cylinder of rank $m$ with  base $c_1c_2\ldots c_m$} is a set $\Delta^{P} _{c_1c_2\ldots c_m}$ formed by all numbers
of the segment  $[0,1]$ with P-representations in which the first $m$ digits coincide with $c_1,c_2,\dots ,c_m$, respectively, i.e.,
$$
\Delta^{P} _{c_1c_2\ldots c_m}=\left\{x: x=\Delta^{P} _{\alpha_1\alpha_2\ldots\alpha_n\ldots}, \alpha_j=c_j, j=\overline{1,m}\right\}.
$$
\end{definition}

Cylinders $\Delta^{P} _{c_1c_2\ldots c_m}$ have the following properties: 

\begin{enumerate}

\item any cylinder $\Delta^{P} _{c_1c_2\ldots c_m}$ is a closed interval;
\item 
$$
\inf \Delta^{P} _{c_1c_2\ldots c_m}= \Delta^{P} _{c_1c_2\ldots c_m000...},
\sup \Delta^{P} _{c_1c_2\ldots c_m}= \Delta^{P} _{c_1c_2\ldots c_m[s-1][s-1][s-1]...};
$$
\item
$$
| \Delta^{P} _{c_1c_2\ldots c_m}|=p_{c_1}p_{c_2}\cdots p_{c_m}; 
$$
\item
$$
 \Delta^{P} _{c_1c_2\ldots c_mc}\subset  \Delta^{P} _{c_1c_2\ldots c_m};
$$
\item
$$
 \Delta^{P} _{c_1c_2\ldots c_m}=\bigcup^{s-1} _{c=0} { \Delta^{P} _{c_1c_2\ldots c_mc}};
$$
\item
$$
\lim_{m \to \infty} { |\Delta^{P} _{c_1c_2\ldots c_m}|}=0;
$$
\item
$$
\frac{| \Delta^{P} _{c_1c_2\ldots c_mc_{m+1}}|}{| \Delta^{-D} _{c_1c_2\ldots c_m}|}=p_{c_{m+1}};
$$
\item 
$$
\sup\Delta^{P} _{c_1c_2...c_mc}=\inf  \Delta^{P} _{c_1c_2...c_m[c+1]},
$$
where $c \ne s-1$;
\item
$$
\bigcap^{\infty} _{m=1} {\Delta^{-D} _{c_1c_2\ldots c_m}}=x=\Delta^{-D} _{c_1c_2\ldots c_m\ldots}.
$$
\end{enumerate}

\begin{definition} A number $x \in[0,1]$ is called   {\itshape P-rational} if 
$$
x=\Delta^{P} _{\alpha_1\alpha_2\ldots\alpha_{n-1}\alpha_n000\ldots}
$$
or
$$
x=\Delta^{P} _{\alpha_1\alpha_2\ldots\alpha_{n-1}[\alpha_n-1][s-1][s-1][s-1]\ldots}.
$$
The  other  numbers in $[0,1]$ are called {\itshape P-irrational}.
\end{definition}

\section{The objects of research}

Let  $2<s$  be a fixed positive integer, $A=\{0,1,\dots ,s-1\}$, 
$A_0=A \setminus \{0\}=\{1,2,\dots , s -1\}$,  and
$$
 L \equiv  (A_0)^{\infty}= (A_0) \times  (A_0) \times  (A_0)\times\dots  
$$
be the space of one-sided sequences of  elements of $ A_0$.

Let $P=\{p_0,p_1, \dots , p_{s-1}\}$ be a fixed set of positive numbers such that $p_0+p_1+\dots + p_{s-1}=1$.

Let us consider a   class  $\Gamma$ that contains  classes $\Gamma_{P_s}$ of sets  $\mathbb  S_{(P_s,u)}$ represented  in the form 
\begin{equation}
\label{S(s,u)1}
\mathbb S_{(P_s,u)}\equiv\left\{x: x= \Delta^{P}_{{\underbrace{u...u}_{\alpha_1-1}} \alpha_1{\underbrace{u...u}_{\alpha_2 -1}}\alpha_2 ...{\underbrace{u...u}_{ \alpha_n -1}}\alpha_n...},  (\alpha_n) \in L, \alpha_n \ne u, \alpha_n \ne 0 \right\}, 
\end{equation}
where $u=\overline{0,s-1}$, the parameters $u$ and $s$ are fixed for the set $\mathbb  S_{(P_s,u)}$. That is the class $\Gamma_{P_s}$ contains the sets  $\mathbb  S_{(P_s,0)}, \mathbb  S_{(P_s,1)},\dots,\mathbb  S_{(s,s-1)}$.

\begin{lemma}
An arbitrary set $\mathbb  S_{(P_s,u)}$  is a uncountable set.
\end{lemma}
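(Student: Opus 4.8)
The plan is to produce an injection from an uncountable set into $\mathbb S_{(P_s,u)}$, so that the latter inherits uncountability. The natural source is the set $T$ of admissible index sequences, i.e. those $(\alpha_n)\in L$ with $\alpha_n\ne u$ for every $n$. Each coordinate $\alpha_n$ ranges freely over $A_0\setminus\{u\}$, a finite set of cardinality $s-1$ when $u=0$ and $s-2$ when $u\in A_0$. Since $s>2$, this set has at least two elements in every case except $(s,u)\in\{(3,1),(3,2)\}$ (the degenerate sets singled out in the Remark), so $T$ is a countable power of a finite set of cardinality $\ge 2$ and is therefore uncountable. It then remains to show that the coding map $\Phi\colon T\to[0,1]$, sending $(\alpha_n)$ to the number whose P-representation is the one displayed in \eqref{S(s,u)1}, is injective.

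First I would establish injectivity of the \emph{symbolic} coding: distinct sequences in $T$ produce distinct P-representation digit strings. This follows from a decoding procedure. The digit string splits uniquely into consecutive blocks, each of the form $u^{k}c$ with $c\ne u$, obtained by scanning until the first non-$u$ digit $c$ is met; that terminal digit is precisely the marker $\alpha_n$, and the number $k$ of preceding $u$'s equals $\alpha_n-1$. Because $\alpha_n\ne u$ for all $n$, every marker is genuinely distinguishable from the filler digits, so the block decomposition, and hence the whole sequence $(\alpha_n)$, is recovered unambiguously from the string.

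The main point is to pass from equality of P-representations to equality of the underlying numbers, i.e. to neutralise the two-representation ambiguity of P-rationals. By the definition of P-rational, a number has two P-representations only when one ends in the period $000\dots$ and the other in the period $[s-1][s-1]\dots$. An admissible string can end in neither: it cannot end in $000\dots$, since the markers $\alpha_n$ are nonzero and occur infinitely often; and it cannot end in $[s-1][s-1]\dots$, because if $u=s-1$ the infinitely many markers $\alpha_n\ne u$ prevent it, while if $u\ne s-1$ any filler digit $u$ in the tail prevents it, unless the tail is marker-only with $\alpha_n\equiv1$, which would force $s=2$ and is excluded. Hence every admissible string is the unique P-representation of its value, so distinct admissible strings yield distinct points of $[0,1]$; combined with the symbolic injectivity of the previous step, this shows $\Phi$ is injective.

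Putting these together, $\Phi$ injects the uncountable set $T$ into $\mathbb S_{(P_s,u)}=\Phi(T)$, whence $\mathbb S_{(P_s,u)}$ is uncountable. I expect the genuine obstacle to be the third step, namely the bookkeeping that excludes both periodic tails and thereby removes the P-rational ambiguity, since the combinatorics of the coding itself is routine once the block structure is noticed. An alternative and essentially equivalent route is to observe that $\mathbb S_{(P_s,u)}=f(\mathbb S_{(s,u)})$, that $f$ is strictly increasing because every $p_i>0$ and hence injective, and that $\mathbb S_{(s,u)}$ is uncountable by Theorem~\ref{th: theorem1}; this again yields uncountability of the image.
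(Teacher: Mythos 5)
Your main argument is correct but proceeds along a genuinely different route from the paper. The paper's proof works at the level of the already-known fractal sets: it defines the map $g:\mathbb S_{(P_s,u)}\to S_u$ that reads off the marker sequence $(\alpha_n)$ and re-encodes it s-adically, observes that neither $S_u$ nor $\mathbb S_{(P_s,u)}$ contains (s-adic- resp.\ P-)rational points so that $g$ is a bijection, and then imports the uncountability of $S_u$ from Theorem~\ref{th: theorem1}. You instead inject the raw symbol space $T=(A_0\setminus\{u\})^{\infty}$ directly into $\mathbb S_{(P_s,u)}$, which makes the proof self-contained: you do not need the prior uncountability result, only the cardinality of a countable power of a finite alphabet. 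The price is that you must verify both halves of injectivity yourself --- the block-decoding argument for symbolic injectivity and the exclusion of the two periodic tails $000\ldots$ and $[s-1][s-1]\ldots$ that would create P-rational collisions --- and you do both correctly; the tail analysis (the case $u\ne s-1$ with eventually marker-only tail forcing $s=2$) is exactly the point the paper glosses over with the single sentence that P-rational numbers do not belong to the set. Your explicit caveat about $(s,u)\in\{(3,1),(3,2)\}$, where the alphabet degenerates to a single letter, is also a genuine improvement in precision: the lemma as stated silently fails there, just as the paper's own Remark concedes for the sets $S_{(3,1)}$ and $S_{(3,2)}$. Finally, the alternative route you sketch in your last sentence --- transport uncountability from $\mathbb S_{(s,u)}$ along the strictly increasing map $f$ --- is essentially the paper's actual proof read in the opposite direction.
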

\begin{proof} Let us consider the mapping $g: \mathbb  S_{(P_s,u)} \to  S_u$. That is 
$$
\forall (\alpha_n)\in L: x= \Delta^{P}_{{\underbrace{u...u}_{\alpha_1-1}} \alpha_1{\underbrace{u...u}_{\alpha_2 -1}}\alpha_2 ...{\underbrace{u...u}_{ \alpha_n -1}}\alpha_n...} \stackrel{g}{\longrightarrow}
 \Delta^{s}_{\alpha_1\alpha_2 ...\alpha_n...}=y=g(x).
$$

 It follows from the definition of an arbitrary set  $S_u$ that s-adic-rational numbers of the form 
$$
\Delta^{s} _{\alpha_1\alpha_2\ldots\alpha_{n-1}\alpha_n000\ldots}
$$
 do not belong to $  S_u$ (since the condition $\alpha_n\notin\{0,u\}$ holds). Hence  each element of $  S_u$ has the unique s-adic representation.

For any $x\in \mathbb  S_{(P_s,u)}$ there exists $y=g(x)\in   S_u$ and for any $y\in   S_u$ there exists $x=g^{-1}(y)\in \mathbb  S_{(P_s,u)}$. Since P-rational numbers do not belong to $\mathbb  S_{(P_s,u)}$, we have that   for arbitrary $x_1\ne x_2$ the inequality $f(x_1)\ne f(x_2)$ holds.

So from the  uncountability of $  S_u$ follows the uncountability of the set $\mathbb  S_{(P_s,u)}$.
\end{proof}

To investigate topological and metric properties of  $\mathbb  S_{(P_s,u)}$, we will study properties of cylinders.

Let $c_1, c_2,\dots , c_n$ be an ordered tuple of integers such that $c_i\in\{0,1,\dots ,s-1\}$ for $i=\overline{1,n}$.

\begin{definition} 
{\itshape A cylinder of rank $n$ with  base $c_1c_2\ldots c_n$} is a set $\Delta^{(P,u)} _{c_1c_2\ldots c_n}$ of the form: 
$$
\Delta^{(P,u)} _{c_1c_2\ldots c_n}=\left\{x: x=\Delta^{P}_{{\underbrace{u...u}_{c_1-1}} c_1{\underbrace{u...u}_{c_2 -1}}c_2 ...{\underbrace{u...u}_{ c_n -1}}c_n{\underbrace{u...u}_{\alpha_{n+1}-1}}\alpha_{n+1}{\underbrace{u...u}_{\alpha_{n+2}-1}}\alpha_{n+2}...}, \alpha_j=c_j, j=\overline{1,n}\right\}.
$$
\end{definition}

By $(a_1a_2\ldots a_k)$ denote the period $a_1a_2\ldots a_k$ in the representation of a periodic number.

\begin{lemma} Cylinders $ \Delta^{(P,u)} _{c_1...c_n} $ have the following properties:
\label{lm: Lemma on cylinders}
\begin{enumerate}
\item
$$
\inf  \Delta^{(P,u)} _{c_1...c_n}=\begin{cases}
\Delta^{P} _{{\underbrace{0...0}_{c_1-1}} c_1{\underbrace{0...0}_{c_2 -1}}c_2 ...{\underbrace{0...0}_{ c_n -1}}c_n({\underbrace{0...0}_{ s-2}}[s-1])} &\text{if $u=0$}\\
\Delta^{P} _{{\underbrace{1...1}_{c_1-1}} c_1{\underbrace{1...1}_{c_2 -1}}c_2 ...{\underbrace{1...1}_{ c_n -1}}c_n({\underbrace{1...1}_{ s-2}}[s-1])} &\text{if $u=1$}\\
$$\\
\Delta^{P} _{{\underbrace{u...u}_{c_1-1}} c_1{\underbrace{u...u}_{c_2 -1}}c_2 ...{\underbrace{u...u}_{ c_n -1}}c_n(1)}&\text{if $ u \in \{2,3,\dots ,s-1\}$,}
\end{cases}
$$
$$
\sup  \Delta^{(P,u)} _{c_1...c_n...}=\begin{cases}
\Delta^{P} _{{\underbrace{[s-1]...[s-1]}_{c_1-1}} c_1 ...{\underbrace{[s-1]...[s-1]}_{ c_n -1}}c_n({\underbrace{[s-1]...[s-1]}_{ s-3}}[s-2])} &\text{if $u=s-1$}\\
\Delta^{P} _{{\underbrace{u...u}_{c_1-1}} c_1{\underbrace{u...u}_{c_2 -1}}c_2 ...{\underbrace{u...u}_{ c_n -1}}c_n({\underbrace{u...u}_{ u}}[u+1])} &\text{if $u\in\{1,\dots, s-2\}$}\\
$$\\
\Delta^{P} _{{\underbrace{0...0}_{c_1-1}} c_1{\underbrace{0...0}_{c_2 -1}}c_2 ...{\underbrace{0...0}_{ c_n -1}}c_n(1)}&\text{if $ u=0$.}
\end{cases}
$$
\item If $d(\cdot) $ is the diameter of a set, then
$$
d(\Delta^{(P,u)} _{c_1...c_n})=d(\mathbb S_{(P_s,u)})p^{c_1+c_2+\dots+c_n-n} _{u}\prod^{n} _{j=1}{p_{c_j}}.
$$
\item
$$
\frac{d(\Delta^{(P,u)} _{c_1...c_nc_{n+1}})}{d(\Delta^{(P,u)} _{c_1...c_n})}=p_{c_{n+1}}p^{c_{n+1}-1} _{u}.
$$
\item 
$$
  \Delta^{(P,u)} _{c_1c_2...c_n} =\bigcup^{s-1} _{i=1} { \Delta^{(P,u)} _{c_1c_2...c_ni}}~~~\forall c_n \in A_0,~~~n \in \mathbb N,~ i \ne u.
$$
\item The following relationships are satisfied: 
\begin{enumerate}
\item if $ u\in \{0,1\}$, then 
$$
\inf \Delta^{(P,u)} _{c_1...c_np}> \sup \Delta^{(P,u)} _{c_1...c_n[p+1]};
$$
\item if  $ u \in \{2,3,\dots ,s-3\}$, then 
$$
\begin{cases}
\sup \Delta^{(P,u)} _{c_1...c_np}< \inf \Delta^{(P,u)} _{c_1...c_n[p+1]}&\text{for all $p+1\le u$}\\
$$\\
\inf \Delta^{(P,u)} _{c_1...c_np}> \sup \Delta^{(P,u)} _{c_1...c_n[p+1]},&\text{for all $u<p$;}
\end{cases}
$$
\item if $ u  \in \{s-2,s-1\}$, then
$$
\sup \Delta^{(P,u)} _{c_1...c_np}< \inf \Delta^{(P,u)} _{c_1...c_n[p+1]} ~~~(\text{in this case, the condition $p\ne s-1$ holds}).
$$
\end{enumerate}
\end{enumerate}
\end{lemma}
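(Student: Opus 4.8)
The plan is to reduce all five assertions to two structural facts about the $P$-representation together with the block description of the points of $\mathbb S_{(P_s,u)}$. Call the word $\underbrace{u\dots u}_{\alpha-1}\alpha$ (of length $\alpha$) the \emph{block of value $\alpha$}, where $\alpha\in A_0\setminus\{u\}$; a point of $\Delta^{(P,u)}_{c_1\dots c_n}$ is the concatenation of the fixed blocks of values $c_1,\dots,c_n$ followed by an arbitrary admissible sequence of blocks. The first fact I use is that the assignment $\Delta^s_{a_1a_2\dots}\mapsto\Delta^P_{a_1a_2\dots}=f(\Delta^s_{a_1a_2\dots})$ is strictly increasing in the lexicographic order of the digit string: this holds because $\beta_i=\sum_{j<i}p_j$ is strictly increasing in $i$ and every $p_j>0$, with no ambiguity on $\mathbb S_{(P_s,u)}$ since by Lemma~1 its points are not $P$-rational. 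Consequently the $\inf$ and $\sup$ of any cylinder are the images of the lexicographically least and greatest admissible strings. The second fact is the affine identity
\[
\Delta^P_{b_1\dots b_N t_1t_2\dots}=\Delta^P_{b_1\dots b_N000\dots}+\Big(\prod_{j=1}^N p_{b_j}\Big)\Delta^P_{t_1t_2\dots},
\]
which is immediate from the defining formula for $\Delta^P$ and the length formula $|\Delta^P_{b_1\dots b_N}|=\prod_{j=1}^N p_{b_j}$.

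For (1) I extremise the tail lexicographically. The leading symbol of a block equals $1$ when $\alpha=1$ and equals $u$ when $\alpha\ge 2$, so extremising the leading symbol already decides whether the optimal blocks have $\alpha=1$ or $\alpha\ge 2$; among blocks with the same leading symbol, the comparison of two of them is decided at the end of the shorter one, where its final symbol (its value $\alpha$) meets a $u$ of the longer one, hence is governed by the sign of $\alpha-u$. Since the admissible blocks form a prefix-free set, a standard exchange (fixed-point) argument shows that the lexicographically optimal \emph{infinite} concatenation is the periodic repetition of one optimal block. Working through the sign conditions gives, for the infimum, the block of largest admissible value when $u\in\{0,1\}$ and the block of value $1$ when $u\ge 2$, and for the supremum the block of value $1$ when $u=0$, the block of value $u+1$ when $1\le u\le s-2$, and the block of value $s-2$ when $u=s-1$. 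Reading off the corresponding periods $(\underbrace{0\dots0}_{s-2}[s-1])$, $(\underbrace{1\dots1}_{s-2}[s-1])$, $(1)$, $(\underbrace{u\dots u}_{u}[u+1])$, $(\underbrace{[s-1]\dots[s-1]}_{s-3}[s-2])$ yields exactly the stated formulas. I expect this periodicity claim to be the main obstacle, since the blocks have different lengths and one must rule out non-periodic or mixed-block optima.

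Parts (2)--(4) are then short. For (2) I apply the affine identity with $B=\underbrace{u\dots u}_{c_1-1}c_1\cdots\underbrace{u\dots u}_{c_n-1}c_n$, a word of length $c_1+\dots+c_n$ in which $u$ occurs $(c_1+\dots+c_n)-n$ times and each $c_j$ once, so $\prod_j p_{b_j}=p_u^{c_1+\dots+c_n-n}\prod_{j=1}^n p_{c_j}$; since the optimal inf/sup tails from part~(1) are independent of the prefix, subtracting the two endpoints of the cylinder gives $d(\Delta^{(P,u)}_{c_1\dots c_n})=\big(\prod_j p_{b_j}\big)\,d(\mathbb S_{(P_s,u)})$, which is the claim. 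Part (3) is the quotient of the level-$(n{+}1)$ and level-$n$ instances of this formula. Part (4) follows from uniqueness of the block decomposition: $x\in\Delta^{(P,u)}_{c_1\dots c_n}$ exactly when its first $n$ block-values are $c_1,\dots,c_n$, and then its $(n{+}1)$-st block-value $i$ ranges precisely over $A_0\setminus\{u\}$, giving the (disjoint) union.

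For (5) the decisive observation is that two adjacent cylinders are separated by a single symbol. Every point of $\Delta^{(P,u)}_{c_1\dots c_n p}$ has block $n+1$ equal to $\underbrace{u\dots u}_{p-1}p$, while every point of $\Delta^{(P,u)}_{c_1\dots c_n[p+1]}$ has block $n+1$ equal to $\underbrace{u\dots u}_{p}[p+1]$; hence any two such points agree on the common prefix $B$ and on the first $p-1$ symbols of block $n+1$ (all equal to $u$) and first differ at position $p$ of that block, where the former shows $p$ and the latter shows $u$. By order-preservation the cylinder $\Delta^{(P,u)}_{c_1\dots c_n p}$ lies entirely to the left of $\Delta^{(P,u)}_{c_1\dots c_n[p+1]}$ when $p<u$ and entirely to its right when $p>u$; this is precisely $\sup\Delta^{(P,u)}_{c_1\dots c_n p}<\inf\Delta^{(P,u)}_{c_1\dots c_n[p+1]}$ in the first case and $\inf\Delta^{(P,u)}_{c_1\dots c_n p}>\sup\Delta^{(P,u)}_{c_1\dots c_n[p+1]}$ in the second. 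It remains only to match the sign of $p-u$ to the three ranges of $u$: for $u\in\{0,1\}$ every admissible $p$ (with $p+1$ admissible) satisfies $p>u$; for $u\in\{s-2,s-1\}$ every such $p$ satisfies $p<u$; and for $u\in\{2,\dots,s-3\}$ both occur, according as $p+1\le u$ or $u<p$, the borderline values $p=u$ and $p+1=u$ being inadmissible and hence vacuous. Here the only labour is the bookkeeping of these ranges; the mathematical content is the single-symbol comparison above.
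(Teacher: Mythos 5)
Your proof is correct, but it takes a genuinely different route from the paper's. The paper derives Property 1 by asserting the extremal periodic tails directly from the affine shift identity and the definition of $\mathbb S_{(P_s,u)}$, obtains Properties 2--4 as corollaries, and then proves Property 5 by brute force: it substitutes the explicit endpoint formulas of Property 1 into $\inf\Delta^{(P,u)}_{c_1\dots c_np}-\sup\Delta^{(P,u)}_{c_1\dots c_n[p+1]}$ (and its counterpart), expands everything in terms of the $\beta_i$ and $p_i$, and verifies positivity separately for $u\in\{0,1\}$, for $2\le u\le s-3$ in two sub-cases, and for $u\in\{s-2,s-1\}$, using identities such as $\beta_{p+1}=\beta_p+p_p$ and closed forms for $\sup\mathbb S_{(P_s,u)}$. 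You instead set up the order-theoretic framework once --- the digit-string-to-real map of the $P$-representation is increasing for the lexicographic order, strictly so away from $P$-rationals --- and then (i) get Property 1 by lexicographic extremization over block concatenations, with the fixed-point argument ($w^*=b_1w^*$ forces $w^*=b_1^\infty$) supplying the periodicity that the paper leaves implicit, and (ii) get Property 5 from a single-symbol comparison: points of the two adjacent cylinders first differ where one shows $p$ and the other shows $u$, so their relative position is governed by the sign of $p-u$, and the case split over $u$ reduces to bookkeeping of which $p$ are admissible. Your route is shorter, uniform across the cases, and actually proves Property 1 (which the paper essentially states without argument); the paper's computation needs no lexicographic machinery and produces explicit expressions for the gaps between adjacent cylinders. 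The only places where you should add a line of detail are the attainment of the lexicographic extrema (closedness of the set of admissible tails in the product topology) and the strictness in Property 5, which uses the fact that no point of $\mathbb S_{(P_s,u)}$ is $P$-rational --- both easy, and both used tacitly by the paper as well.
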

\begin{proof} \emph{The first property} follows from the equality
$$
x=\Delta^{P}_{{\underbrace{u...u}_{c_1-1}} c_1{\underbrace{u...u}_{c_2 -1}}c_2 ...{\underbrace{u...u}_{ c_n -1}}c_n{\underbrace{u...u}_{\alpha_{n+1}-1}}\alpha_{n+1}{\underbrace{u...u}_{\alpha_{n+2}-1}}\alpha_{n+2}...} 
$$
$$
=\Delta^{P}_{{\underbrace{u...u}_{c_1-1}} c_1{\underbrace{u...u}_{c_2 -1}}c_2 ...{\underbrace{u...u}_{ c_n -1}}c_n(0)}+p^{c_1+\dots +c_n-n} _{u}\left(\prod^{n} _{k=1}{p_{c_k}}\right)\Delta^{P}_{{\underbrace{u...u}_{\alpha_{n+1}-1}}\alpha_{n+1}{\underbrace{u...u}_{\alpha_{n+2}-1}}\alpha_{n+2}...} 
$$
and the definition of  $ \mathbb S_{(P_s,u)}$. 

It is easy to see that \emph{the second property} follows from the first property, \emph{the third property} is a corollary of the first and second properties, and  \emph{Property 4} follows from the definition of the set. 

Let us show that \emph{Property 5} is true. We now prove that the first inequality holds for  $ u=1$. In fact,
$$
\inf \Delta^{(P,0)} _{c_1...c_np}- \sup \Delta^{(P,0)} _{c_1...c_n[p+1]}=
\Delta^{P} _{{\underbrace{0...0}_{c_1-1}} c_1{\underbrace{0...0}_{c_2 -1}}c_2 ...{\underbrace{0...0}_{ c_n -1}}c_n{\underbrace{0...0}_{p -1}}p({\underbrace{0...0}_{ s-2}}[s-1])}-\Delta^{P} _{{\underbrace{0...0}_{c_1-1}} c_1{\underbrace{0...0}_{c_2 -1}}c_2 ...{\underbrace{0...0}_{ c_n -1}}c_n{\underbrace{0...0}_{p}}[p+1](1)}
$$
$$
=\beta_pp^{c_1+...+c_n-n+p-1} _0\prod^{n} _{j=1}{p_{c_j}}+p_pp^{c_1+...+c_n-n+p-1} _0\left(\prod^{n} _{j=1}{p_{c_j}}\right)\inf{\mathbb S_{(P_s,0)}}
$$
$$
-\beta_{p+1}p^{c_1+...+c_n-n+p} _0\prod^{n} _{j=1}{p_{c_j}}-p_{p+1}p^{c_1+...+c_n-n+p} _0\left(\prod^{n} _{j=1}{p_{c_j}}\right)\sup{\mathbb S_{(P_s,0)}}
$$
$$
=p^{c_1+...+c_n-n+p} _0\left(\prod^{n} _{j=1}{p_{c_j}}\right)\left(\beta_pp^{-1} _0+p_pp^{-1} _0\inf{\mathbb S_{(P_s,0)}}-\beta_{p+1}-p_{p+1}\sup{\mathbb S_{(P_s,0)}}\right)
$$
$$
=p^{c_1+...+c_n-n+p-1} _0\left(\prod^{n} _{j=1}{p_{c_j}}\right)(p_0(1-p_0-p_p-p_{p+1}\sup{\mathbb S_{(P_s,0)}})+(1-p_0)(p_1+...+p_{p-1})+p_p\inf{\mathbb S_{(P_s,0)}})>0
$$
because
$$
1-p_0-p_p-p_{p+1}\sup{\mathbb S_{(P_s,0)}}=1-p_0-p_p-p_{p+1}\frac{p_0}{1-p_1}=
\frac{\sum_{i\notin\{0,1,p,p+1\}}p_i+p_{p+1}(1-p_0)+p_0p_1+p_1p_p}{1-p_1}>0.
$$

Also,
$$
\inf \Delta^{(P,1)} _{c_1...c_np}- \sup \Delta^{(P,1)} _{c_1...c_n[p+1]}=
\Delta^{P} _{{\underbrace{1...1}_{c_1-1}} c_1{\underbrace{0...0}_{c_2 -1}}c_2 ...{\underbrace{1...1}_{ c_n -1}}c_n{\underbrace{1...1}_{p -1}}p({\underbrace{1...1}_{ s-2}}[s-1])}-\Delta^{P} _{{\underbrace{1...1}_{c_1-1}} c_1{\underbrace{1...1}_{c_2 -1}}c_2 ...{\underbrace{1...1}_{ c_n -1}}c_n\underbrace{1...1}_{p}[p+1](12)} 
$$
$$
=\beta_pp^{c_1+...+c_n+p-n-1} _1\prod^{n} _{j=1}{p_{c_j}}+p_pp^{c_1+...+c_n-n+p-1} _1\left(\prod^{n} _{j=1}{p_{c_j}}\right)\inf{\mathbb S_{(P_s,1)}}
$$
$$
-\beta_{p+1}p^{c_1+...+c_n+p-n} _1\prod^{n} _{j=1}{p_{c_j}}-p_{p+1}p^{c_1+...+c_n-n+p} _1\left(\prod^{n} _{j=1}{p_{c_j}}\right)\sup{\mathbb S_{(P_s,1)}}
$$
$$
=p^{c_1+...+c_n-n+p-1} _1\left(\prod^{n} _{j=1}{p_{c_j}}\right)\left(\beta_p+p_p\inf{\mathbb S_{(P_s,1)}}-\beta_{p+1}p_1-p_{p+1}p_1\sup{\mathbb S_{(P_s,1)}}\right)
$$
$$
=p^{c_1+...+c_n-n+p-1} _1\left(\prod^{n} _{j=1}{p_{c_j}}\right)(p_p\inf{\mathbb S_{(P_s,1)}}+p_1(1-p_1-p_p-p_{p+1}\sup{\mathbb S_{(P_s,1)}})+(1-p_1)(p_0+p_2+...+p_{p-1}))>0,
$$
since
$$
\sup{\mathbb S_{(P_s,1)}}=\Delta^P _{(12)}=\beta_1+\sum^{\infty} _{k=1}{\beta_1p^k _{1} p^k _2}+\sum^{\infty} _{k=1}{\beta_2p^k _{1} p^k _2}=\frac{p_0+p_0p_1+p^2 _1}{1-p_1p_2}>0
$$
and
$$
1=p_0+p_1+\dots+ p_{s-1}.
$$

Let us prove the system of inequalities. Consider the first inequality. For the case when $p+1\le u$ we get 
$$
\inf \Delta^{(P,u)} _{c_1...c_n[p+1]}-\sup \Delta^{(P,u)} _{c_1...c_np}=\Delta^{P} _{{\underbrace{u...u}_{c_1-1}} c_1{\underbrace{u...u}_{c_2 -1}}c_2 ...{\underbrace{u...u}_{ c_n -1}}c_n{\underbrace{u...u}_{p}} [p+1](1)}-\Delta^{P} _{{\underbrace{u...u}_{c_1-1}} c_1{\underbrace{u...u}_{c_2 -1}}c_2 ...{\underbrace{u...u}_{ c_n -1}}c_n{\underbrace{u...u}_{p-1}} p({\underbrace{u...u}_{ u}}[u+1])}
$$
$$
=\beta_up^{c_1+...+c_n-n+p-1} _u\prod^{n} _{j=1}{p_{c_j}}+\beta_{p+1}p^{c_1+...+c_n-n+p} _u\prod^{n} _{j=1}{p_{c_j}}+p_{p+1}p^{c_1+...+c_n-n+p} _u\left(\prod^{n} _{j=1}{p_{c_j}}\right)\cdot\Delta^P _{(1)}
$$
$$
-\beta_pp^{c_1+...+c_n-n+p-1} _u\prod^{n} _{j=1}{p_{c_j}}-p_{p}p^{c_1+...+c_n-n+p-1} _u\left(\prod^{n} _{j=1}{p_{c_j}}\right)\cdot\Delta^P _{({\underbrace{u...u}_{ u}}[u+1])}
$$
$$
=p^{c_1+...+c_n-n+p-1} _u\left(\prod^{n} _{j=1}{p_{c_j}}\right)\left(\beta_u+\beta_{p+1}p_u+p_{p+1}p_u\Delta^P _{(1)}-\beta_p-p_p\Delta^P _{({\underbrace{u...u}_{ u}}[u+1])}\right)
$$
$$
=p^{c_1+...+c_n-n+p-1} _u\left(\prod^{n} _{j=1}{p_{c_j}}\right)\left(p_{p+1}p_u\Delta^P _{(1)}+(\beta_u-\beta_p)+p_up_p+\beta_pp_u-p_p\Delta^P _{({\underbrace{u...u}_{ u}}[u+1])}\right)>0
$$
since the conditions $p<u$, $\beta_u-\beta_p>0$,   and $\beta_{p+1}=\beta_p+p_{p}$ hold.

Let us prove that the second inequality is true. Here $ p>u $, i.e.,  $p-u \ge 1$. Similarly,
$$
\inf \Delta^{(P,u)} _{c_1...c_np}-\sup \Delta^{(P,u)} _{c_1...c_n[p+1]}=\Delta^{P} _{{\underbrace{u...u}_{c_1-1}} c_1{\underbrace{u...u}_{c_2 -1}}c_2 ...{\underbrace{u...u}_{ c_n -1}}c_n{\underbrace{u...u}_{p-1}}p(1)}-\Delta^{P} _{{\underbrace{u...u}_{c_1-1}} c_1{\underbrace{u...u}_{c_2 -1}}c_2 ...{\underbrace{u...u}_{ c_n -1}}c_n{\underbrace{u...u}_{p}}[p+1]({\underbrace{u...u}_{ u}}[u+1])}
$$
$$
=\beta_pp^{c_1+...+c_n-n+p-1} _u\prod^{n} _{j=1}{p_{c_j}}+p_{p}p^{c_1+...+c_n-n+p-1} _u\left(\prod^{n} _{j=1}{p_{c_j}}\right)\cdot\Delta^P _{(1)}
$$
$$
-\beta_up^{c_1+...+c_n-n+p-1} _u\prod^{n} _{j=1}{p_{c_j}}-\beta_{p+1}p^{c_1+...+c_n-n+p} _u\prod^{n} _{j=1}{p_{c_j}}
-p_{p+1}p^{c_1+...+c_n-n+p} _u\left(\prod^{n} _{j=1}{p_{c_j}}\right)\cdot\Delta^P _{({\underbrace{u...u}_{ u}}[u+1])}
$$
$$
=p^{c_1+...+c_n-n+p-1} _u\left(\prod^{n} _{j=1}{p_{c_j}}\right)\left(\beta_p+p_{p}\Delta^P _{(1)}-\beta_u-\beta_{p+1}p_u-p_{p+1}p_u\Delta^P _{({\underbrace{u...u}_{ u}}[u+1])}\right)
$$
$$
=p^{c_1+...+c_n-n+p-1} _u\left(\prod^{n} _{j=1}{p_{c_j}}\right)\left(p_{p}\Delta^P _{(1)}
+(p_{u+1}+...+p_{p+1})+p_u(p_{p+1}+...+p_{s-1}-p_{p+1}\Delta^P _{({\underbrace{u...u}_{ u}}[u+1])})\right)>0
$$
since the conditions $p>u$, $\beta_p-\beta_u=p_u+p_{u+1}+...+p_{p-1}$, and  $1-\beta_{p+1}=p_{p+1}+...+p_{s-1}$ hold.

Suppose that $u=s-2$. Then 
$$
\inf\Delta^{(P,s-2)} _{c_1c_2...c_n[p+1]}-\sup\Delta^{(P,s-2)} _{c_1c_2...c_np}
$$
$$
=\Delta^{P} _{{\underbrace{[s-2]...[s-2]}_{c_1-1}} c_1{\underbrace{[s-2]...[s-2]}_{c_2 -1}}c_2 ...{\underbrace{[s-2]...[s-2]}_{ c_n -1}}c_n{\underbrace{[s-2]...[s-2]}_{p}} [p+1](1)}
$$
$$
- \Delta^{P} _{{\underbrace{[s-2]...[s-2]}_{c_1-1}} c_1{\underbrace{[s-2]...[s-2]}_{c_2 -1}}c_2 ...{\underbrace{[s-2]...[s-2]}_{ c_n -1}}c_n{\underbrace{[s-2]...[s-2]}_{p-1}} p({\underbrace{[s-2]...[s-2]}_{ s-2}}[s-1])}
$$
$$
=\beta_{s-2}p^{c_1+...+c_n-n+p-1} _{s-2}\prod^{n} _{j=1}{p_{c_j}}+\beta_{p+1}p^{c_1+...+c_n-n+p} _{s-2}\prod^{n} _{j=1}{p_{c_j}}
$$
$$
+p_{p+1}p^{c_1+...+c_n-n+p} _{s-2}\left(\prod^{n} _{j=1}{p_{c_j}}\right)\cdot\Delta^{P} _{(1)}-\beta_pp^{c_1+...+c_n-n+p-1} _{s-2}\prod^{n} _{j=1}{p_{c_j}}
$$
$$
-p_pp^{c_1+...+c_n-n+p-1} _{s-2}\left(\prod^{n} _{j=1}{p_{c_j}}\right)\cdot\Delta^P _{({\underbrace{[s-2]...[s-2]}_{ s-2}}[s-1])}
$$
$$
=p^{c_1+...+c_n-n+p-1} _{s-2}\left(\prod^{n} _{j=1}{p_{c_j}}\right)(\beta_{s-2}+\beta_{p+1}p_{s-2}+p_{s-2}p_{p+1}\Delta^{P} _{(1)}-\beta_{p}-p_p\Delta^P _{({\underbrace{[s-2]...[s-2]}_{ s-2}}[s-1])})
$$
$$
=p^{c_1+...+c_n-n+p-1} _{s-2}\left(\prod^{n} _{j=1}{q_{c_j}}\right)(p_p(1-\Delta^P _{({\underbrace{[s-2]...[s-2]}_{ s-2}}[s-1])})+(p_{p+1}+...+p_{s-3})+\beta_{p+1}p_{s-2}+p_{s-2}p_{p+1}\Delta^{P} _{(1)})>0
$$
since $\beta_{s-2}-\beta_{p}=p_p+p_{p+1}+\dots+p_{s-3}$. Here $p\ne s-1$.

Suppose that $u=s-1$. Then
$$
\inf\Delta^{(P,s-1)} _{c_1c_2...c_n[p+1]}-\sup\Delta^{(P,s-1)} _{c_1c_2...c_np}
$$
$$
=\Delta^{P} _{{\underbrace{[s-1]...[s-1]}_{c_1-1}} c_1{\underbrace{[s-1]...[s-1]}_{c_2 -1}}c_2 ...{\underbrace{[s-1]...[s-1]}_{ c_n -1}}c_n{\underbrace{[s-1]...[s-1]}_{p}} [p+1](1)}
$$
$$
-\Delta^{P} _{{\underbrace{[s-1]...[s-1]}_{c_1-1}} c_1 ...{\underbrace{[s-1]...[s-1]}_{ c_n -1}}c_n{\underbrace{[s-1]...[s-1]}_{p-1}}p({\underbrace{[s-1]...[s-1]}_{ s-3}}[s-2])}
$$
$$
=\beta_{s-1}p^{c_1+...+c_n-n+p-1} _{s-1}\prod^{n} _{j=1}{p_{c_j}}+\beta_{p+1}p^{c_1+...+c_n-n+p} _{s-1}\prod^{n} _{j=1}{p_{c_j}}
$$
$$
+p_{p+1}p^{c_1+...+c_n-n+p} _{s-1}\left(\prod^{n} _{j=1}{p_{c_j}}\right)\cdot\Delta^{P} _{(1)}-\beta_pp^{c_1+...+c_n-n+p-1} _{s-1}\prod^{n} _{j=1}{p_{c_j}}
$$
$$
-p_pp^{c_1+...+c_n-n+p-1} _{s-1}\left(\prod^{n} _{j=1}{p_{c_j}}\right)\cdot\Delta^P _{({\underbrace{[s-1]...[s-1]}_{ s-3}}[s-2])}
$$
$$
=p^{c_1+...+c_n-n+p-1} _{s-1}\left(\prod^{n} _{j=1}{p_{c_j}}\right)(\beta_{s-1}+\beta_{p+1}p_{s-1}+p_{s-1}p_{p+1}\Delta^{P} _{(1)}-\beta_{p}-p_p\Delta^P _{({\underbrace{[s-1]...[s-1]}_{ s-3}}[s-2])})>0.
$$
\end{proof}

\begin{theorem}
The set $\mathbb S_{(P_s,u)}$ is a perfect and nowhere dense set of zero Lebesgue measure.
\end{theorem}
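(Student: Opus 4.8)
The plan is to read off the Moran/Cantor structure of $\mathbb S_{(P_s,u)}$ from Lemma~\ref{lm: Lemma on cylinders}. For a block of admissible digits write $\overline{\Delta}^{(P,u)}_{c_1\ldots c_n}$ for the closed interval $\bigl[\inf\Delta^{(P,u)}_{c_1\ldots c_n},\ \sup\Delta^{(P,u)}_{c_1\ldots c_n}\bigr]$, whose endpoints are given explicitly by property~(1), and set
$$
F_n=\bigcup_{\substack{c_1,\ldots,c_n\in A_0\\ c_j\ne u}}\overline{\Delta}^{(P,u)}_{c_1\ldots c_n}.
$$
Each $F_n$ is a finite union of closed intervals, hence closed, and property~(4) gives $\Delta^{(P,u)}_{c_1\ldots c_nc_{n+1}}\subset\Delta^{(P,u)}_{c_1\ldots c_n}$, whence $\overline{\Delta}^{(P,u)}_{c_1\ldots c_nc_{n+1}}\subset\overline{\Delta}^{(P,u)}_{c_1\ldots c_n}$ and so $F_{n+1}\subset F_n$. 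My first step is to prove $\mathbb S_{(P_s,u)}=\bigcap_{n=1}^{\infty}F_n$: the inclusion $\subseteq$ is immediate, and for $\supseteq$ I would use the strict separation inequalities of property~(5), which make the hulls at each fixed level pairwise disjoint. Hence a point of $\bigcap_n F_n$ selects a unique nested chain $\overline{\Delta}^{(P,u)}_{c_1}\supset\overline{\Delta}^{(P,u)}_{c_1c_2}\supset\cdots$ whose diameters tend to $0$ (property~(2), since along a branch each factor $p_{c_j}p_u^{\,c_j-1}$ is bounded by some $r<1$), and the single point of intersection is $\Delta^{(P,u)}_{c_1c_2\ldots}\in\mathbb S_{(P_s,u)}$. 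This exhibits $\mathbb S_{(P_s,u)}$ as closed.

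For perfectness it remains to rule out isolated points. Given $x=\Delta^{(P,u)}_{c_1c_2\ldots}\in\mathbb S_{(P_s,u)}$, I would, for each $n$, keep $c_1,\ldots,c_{n-1}$, replace $c_n$ by a different admissible digit $c_n'\in A_0\setminus\{u\}$ (possible exactly because at least two admissible digits exist, i.e. outside the degenerate cases $\mathbb S_{(3,1)},\mathbb S_{(3,2)}$ noted in the Remark), and complete to a point $x_n\in\mathbb S_{(P_s,u)}$. Then $x_n\ne x$, since $\mathbb S_{(P_s,u)}$ contains no $P$-rational points and distinct admissible $P$-representations give distinct numbers, while $x,x_n\in\Delta^{(P,u)}_{c_1\ldots c_{n-1}}$ forces $|x-x_n|\le d\bigl(\Delta^{(P,u)}_{c_1\ldots c_{n-1}}\bigr)\to0$. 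Thus $x_n\to x$ with $x_n\ne x$, so $x$ is a limit point and $\mathbb S_{(P_s,u)}$ is perfect.

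For zero Lebesgue measure I would use the diameter bookkeeping of properties~(2)--(3). By property~(3), the children of any cylinder satisfy $\sum_{c_{n+1}}d\bigl(\Delta^{(P,u)}_{c_1\ldots c_nc_{n+1}}\bigr)=q\,d\bigl(\Delta^{(P,u)}_{c_1\ldots c_n}\bigr)$, where $q=\sum_{c\in A_0\setminus\{u\}}p_c\,p_u^{\,c-1}$; on the other hand property~(5) shows these children hulls are disjoint subintervals of the parent hull leaving a gap of positive length, so the sum is \emph{strictly} below the parent diameter, giving $q<1$. Writing $\lambda$ for Lebesgue measure and summing the diameter formula of property~(2) over level $n$,
$$
\lambda\bigl(\mathbb S_{(P_s,u)}\bigr)\le\sum_{c_1,\ldots,c_n}d\bigl(\overline{\Delta}^{(P,u)}_{c_1\ldots c_n}\bigr)=d\bigl(\mathbb S_{(P_s,u)}\bigr)\,q^{n},
$$
and letting $n\to\infty$ yields $\lambda\bigl(\mathbb S_{(P_s,u)}\bigr)=0$.

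Finally, nowhere density follows since $\mathbb S_{(P_s,u)}$ is closed, so it suffices to show its interior is empty. Given a nonempty open interval $I$ meeting $\mathbb S_{(P_s,u)}$ at a point $x=\Delta^{(P,u)}_{c_1c_2\ldots}$, I would pick $n$ so large that $d\bigl(\Delta^{(P,u)}_{c_1\ldots c_n}\bigr)$ is smaller than the distance from $x$ to $\partial I$, which places the whole hull $\overline{\Delta}^{(P,u)}_{c_1\ldots c_n}$ inside $I$; the gap between two consecutive children supplied by property~(5) then produces a point of $I\setminus\mathbb S_{(P_s,u)}$, so $I\not\subset\mathbb S_{(P_s,u)}$. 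Hence the interior is empty and the set is nowhere dense. The one genuinely delicate step is the identity $\mathbb S_{(P_s,u)}=\bigcap_n F_n$: everything rests on property~(5) giving strict separation of adjacent cylinders, so that the hulls are truly disjoint and no spurious limit points enter — and this is precisely the place where the excluded degenerate cases would fail.
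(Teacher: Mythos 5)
Your proposal is correct and follows essentially the same route as the paper: realize $\mathbb S_{(P_s,u)}$ as a nested intersection of finite unions of closed cylinder hulls, get closedness and perfectness from shrinking cylinders containing each point, get nowhere density from the gaps guaranteed by Property~5 of Lemma~\ref{lm: Lemma on cylinders}, and get zero measure from the geometric decay of the total length at level $n$. The only differences are cosmetic (you change a digit to produce the approximating sequence where the paper takes an endpoint of $J_n$, and you sum level-$n$ diameters directly where the paper telescopes complements), and you are somewhat more explicit than the paper in justifying the contraction ratio $q<1$.
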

\begin{proof}
 We now prove that \emph{the set $\mathbb  S_{(P_s,u)}$ is a  nowhere dense set}.  From the definition it follows that there exist cylinders $ \Delta^{(P,u)} _{c_1...c_n}$ of rank $n$ in an arbitrary subinterval of the segment    $I=[\inf\mathbb  S_{(P_s,u)},\sup\mathbb  S_{(P_s,u)}]$. Since Property 5 from Lemma~\ref{lm: Lemma on cylinders} is true  for these cylinders, we have that for any subinterval of  $ I$ there exists a subinterval such that does not contain points from $\mathbb  S_{(P_s,u)}$. So $\mathbb  S_{(P_s,u)}$ is a  nowhere dense set.
 
Let us show that \emph{$\mathbb  S_{(P_s,u)}$ is a set of zero Lebesgue measure}. Suppose that $ I^{(P_s,u)} _{c_1c_2...c_n} $ is a closed interval whose endpoints coincide with endpoits of the cylinder $ \Delta^{(P,u)} _{c_1c_2...c_n}$,
$$
|I^{(P_s,u)} _{c_1c_2...c_n}|=d(\Delta^{(P,u)} _{c_1c_2...c_n})=d(\mathbb S_{(P_s,u)})p^{c_1+c_2+\dots+c_n-n} _{u}\prod^{n} _{j=1}{p_{c_j}},
$$
 and
$$
 \mathbb  S_{(P_s,u)}= \bigcap^{\infty} _{k=1} E^{(P_s,u)} _k,
$$
where
$$
E^{(P_s,u)} _1=\bigcup_{c_1\in A_0\setminus\{u\}}{I^{(P_s,u)} _{c_1}},
$$
$$
E^{(P_s,u)} _2=\bigcup_{c_1,c_2\in A_0\setminus\{u\}}{I^{(P_s,u)} _{c_1c_2}},
$$
$$
\dots\dots\dots\dots\dots\dots\dots
$$
$$
E^{(P_s,u)} _k= \bigcup_{c_1,c_2,...,c_k\in A_0\setminus\{u\}}{I^{(P_s,u)} _{c_1c_2...c_k}},
$$
$$
\dots\dots\dots\dots\dots\dots\dots
$$
In addition, since $ E^{(P_s,u)} _{k+1} \subset E^{(P_s,u)} _k $, we have 
$$
E^{(P_s,u)} _k= E^{(P_s,u)} _{k+1} \cup \bar E^{(P_s,u)} _{k+1}.
$$

Let $  I$ be an initial closed interval such that $ \lambda(I)=d_0 $ and $\mathbb  [\inf \mathbb S_{(P_s,u)}, \sup\mathbb S_{(P_s,u)}]=I$, $\lambda(\cdot)$ be the Lebesgue measure of a set. Then
$$
\lambda(E^{(P_s,u)} _1)=\sum_{c_1\in A_0\setminus\{u\}}{|I^{(P_s,u)} _{c_1}|}=d(\mathbb S_{(P_s,u)})\sum_{c_1\in A_0\setminus\{u\}}{p^{c_1-1} _{u}}=\gamma_0.
$$

We get
$$
\lambda(\bar E^{(P_s,u)} _1)=d_0-\lambda(E^{(P_s,u)} _1)=d_0 - \gamma_0 d_0= d_0(1 - \gamma_0).
$$

Similarly,
$$
\lambda(\bar E^{(P_s,u)} _2)=\lambda(E^{(P_s,u)} _1)-\lambda(E^{(P_s,u)} _2)=\gamma_0d_0-\gamma^2 _0d_0=d_0\gamma_0(1-\gamma_0),
$$
$$
\lambda(\bar E^{(P_s,u)} _3)=\lambda(E^{(P_s,u)} _2)-\lambda(E^{(P_s,u)} _3)=\gamma^2 _0d_0-\gamma^3 _0d_0=(1-\gamma_0)\gamma^2 _0d_0,
$$
$$
\dots\dots\dots\dots\dots\dots\dots
$$
So,
$$
\lambda(\mathbb S_{(P_s,u)})=d_0-\sum^{n} _{k=1}{\lambda(\bar E^{(P_s,u)} _k)}=d_0-\sum^{n} _{k=1}{\gamma^{k-1} _0d_0(1-\gamma_0)}=d_0-\frac{d_0(1-\gamma_0)}{1-\gamma_0}=0.
$$
The set $\mathbb  S_{(P_s,u)}$  is a set of zero Lebesgue measure. 

Let us prove that \emph{$\mathbb  S_{(P_s,u)}$  is a perfect set}. Since 
$$ 
E^{(P_s,u)} _k= \bigcup_{c_1,c_2,...,c_k\in A_0\setminus\{u\}}{I^{(P_s,u)}  _{c_1c_2...c_k}}
$$
 is a closed set ($E^{(P_s,u)} _k$ is a union of segments), we see that 
$$
 \mathbb  S_{(P_s,u)}= \bigcap^{\infty} _{k=1} E^{(P_s,u)} _k
$$
is a closed set. 

Let $ x \in \mathbb  S_{(P_s,u)} $,    $ P$  be any interval that contains $ x $, and $ J_n $ be a segment of  $ E^{(P_s,u)} _n $ that contains $ x $. Choose a number $ n $ such that $  J_n \subset P $. Suppose that $ x_n $ is the endpoint of $ J_n $ such that the condition 
$ x_n \ne x $ holds. Hence $ x_n \in \mathbb  S_{(P_s,u)} $ and $  x $ is a limit point of the set. 

Since $\mathbb  S_{(P_s,u)}$ is a closed set and does not contain isolated points, we obtain that $\mathbb  S_{(P_s,u)}$ is a perfect set. 
\end{proof}

\begin{theorem}
The set  $\mathbb S_{(P_s,u)} $ is a self-similar fractal and the Hausdorff dimension $\alpha_0 (\mathbb S_{(P_s,u)})$ of the set satisfies the following equation 
$$
\sum _{i\in A_0\setminus\{u\}} {\left(p_ip^{i-1} _u\right)^{\alpha_0}}=1.
$$
\end{theorem}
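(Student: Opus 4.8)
The plan is to realize $\mathbb S_{(P_s,u)}$ as the attractor of a finite iterated function system of contracting similarities satisfying the open set condition, and then to apply the classical Moran dimension formula recalled in the Introduction for the set $E$ (equation \eqref{eq: Cantor-like set}). All the geometric input needed has already been packaged into Lemma~\ref{lm: Lemma on cylinders}, so the argument is mostly a matter of reading off the right data and checking that the hypotheses of the Moran theorem hold.

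First I would produce the similarities. For each admissible digit $i\in A_0\setminus\{u\}$ define the affine map $\varphi_i\colon I\to I$ on the interval $I=[\inf\mathbb S_{(P_s,u)},\sup\mathbb S_{(P_s,u)}]$ that reproduces the prepending of the block encoding the digit $i$ to a $P$-representation. The decomposition identity used to prove the first property of Lemma~\ref{lm: Lemma on cylinders}, namely
$$
x=\Delta^{P}_{{\underbrace{u\ldots u}_{c_1-1}}c_1\ldots{\underbrace{u\ldots u}_{c_n-1}}c_n(0)}+p^{c_1+\dots+c_n-n}_{u}\Bigl(\prod^{n}_{k=1}p_{c_k}\Bigr)\Delta^{P}_{{\underbrace{u\ldots u}_{\alpha_{n+1}-1}}\alpha_{n+1}\ldots},
$$
shows that each $\varphi_i$ has the form $\varphi_i(t)=r_it+b_i$; since we work in $\mathbb R$, such an affine contraction is automatically a similarity, and Property~3 of Lemma~\ref{lm: Lemma on cylinders} identifies its ratio as $r_i=p_ip^{i-1}_u$, which lies in $(0,1)$ because $p_i,p_u\in(0,1)$ and $i\ge 1$. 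Property~4 of the same lemma gives the invariance
$$
\mathbb S_{(P_s,u)}=\bigcup_{i\in A_0\setminus\{u\}}\varphi_i\!\left(\mathbb S_{(P_s,u)}\right),
$$
so $\mathbb S_{(P_s,u)}$ is the self-similar attractor of $\{\varphi_i\}_{i\in A_0\setminus\{u\}}$. The cylinders $\Delta^{(P,u)}_{c_1\ldots c_n}$ then play the role of the Moran basic sets: they are closed intervals, nested by Property~4, geometrically similar copies of the whole set by the identity above, and their diameters tend to $0$ by Property~2 together with $r_i<1$.

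Next I would verify the open set condition using $U=(\inf\mathbb S_{(P_s,u)},\sup\mathbb S_{(P_s,u)})$. One has $\varphi_i(U)\subset U$ for every admissible $i$, and the images $\varphi_i(U)$ are pairwise disjoint precisely because Property~5 of Lemma~\ref{lm: Lemma on cylinders} produces a genuine gap between the consecutive cylinders $\Delta^{(P,u)}_{c_1\ldots c_n p}$ and $\Delta^{(P,u)}_{c_1\ldots c_n[p+1]}$; the particular strict inequality to invoke depends on the position of $p$ relative to $u$, but in every regime $u\in\{0,1\}$, $u\in\{2,\dots,s-3\}$, and $u\in\{s-2,s-1\}$ a separating gap is guaranteed. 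Hence the rank-one pieces do not overlap and the open set condition is satisfied.

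With the self-similar structure and the open set condition established, the Moran dimension theorem stated after \eqref{eq: Cantor-like set} (equivalently, Hutchinson's theorem, cf.\ \cite{Falconer1997,Falconer2004}) applies verbatim: $\mathbb S_{(P_s,u)}$ is a self-similar fractal whose Hausdorff dimension $\alpha_0$ is the unique root of $\sum_i r_i^{\alpha_0}=1$, that is,
$$
\sum_{i\in A_0\setminus\{u\}}\left(p_ip^{i-1}_u\right)^{\alpha_0}=1.
$$
The main obstacle is the separation step, since the sign of $\sup\Delta^{(P,u)}_{c_1\ldots c_n p}-\inf\Delta^{(P,u)}_{c_1\ldots c_n[p+1]}$ genuinely reverses as $u$ moves through its range; but this is exactly the casework that Property~5 of Lemma~\ref{lm: Lemma on cylinders} was designed to settle, so once it is cited the remaining work reduces to the routine identification of the contraction ratios and the confirmation of the Moran hypotheses.
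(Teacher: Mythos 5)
Your proof is correct and follows essentially the same route as the paper: both decompose $\mathbb S_{(P_s,u)}$ into the first-rank pieces $I^{(P_s,u)} _i\cap \mathbb S_{(P_s,u)}$, each similar to the whole set with ratio $p_ip^{i-1} _u$, and then invoke the Moran--Hutchinson dimension formula. The only difference is that you explicitly construct the IFS maps and verify the open set condition via Property~5 of Lemma~\ref{lm: Lemma on cylinders}, a separation step the paper's proof leaves implicit.
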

\begin{proof}
Since 
$ \mathbb S_{(P_s,u)} \subset I$ and $ \mathbb S_{(P_s,u)}$ is a perfect set, we obtain that  $\mathbb S_{(P_s,u)}$ is a compact set. In addition, 
$$
\mathbb S_{(P_s,u)}=\bigcup_{i\in A_0\setminus\{u\}}{\left[I^{(P_s,u)} _i\cap \mathbb S_{(P_s,u)}\right]}
$$
and $\left[I^{(P_s,u)} _i\cap \mathbb S_{(P_s,u)}\right]\stackrel{p_ip^{i-1} _u}{\sim}\mathbb S_{(P_s,u)}$ for all $i\in A_0\setminus\{u\}$.

Since the set $\mathbb S_{(P_s,u)}$ is a compact self-similar set of space  $ \mathbb R^1 $, we have that the self-similar dimension of this set is equal to the Hausdorff dimension of $\mathbb S_{(P_s,u)}$. So the set  $\mathbb S_{(P_s,u)} $ is a self-similar fractal, and its Hausdorff dimension $\alpha_0$  satisfies the equation
$$
\sum _{i\in A_0\setminus\{u\}} {\left(p_ip^{i-1} _u\right)^{\alpha_0}}=1.
$$
\end{proof}

\begin{theorem}
Let  $E$ be a set whose elements represented in terms of the P-representation by a finite number of fixed combinations $\tau_1, \tau_2,\dots,\tau_m$ of  digits from the alphabet $A$. Then the Hausdorff  dimension $\alpha_0$ of $E$ satisfies the following equation: 
$$
\sum^{m} _{j=1}{\left(\prod^{s-1} _{i=0}{p^{N_i(\tau_j)} _i}\right)^{\alpha_0}}=1,
$$
where $N_i(\tau_k)$ ($k=\overline{1,m}$)  is a number of  the digit $i$ in $\tau_k$ from the set $\{\tau_1, \tau_2,\dots,\tau_m\}$.
\end{theorem}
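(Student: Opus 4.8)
The plan is to exhibit $E$ as a homogeneous Moran set (equivalently, as the attractor of a self-similar system) whose $m$ contraction ratios are read off directly from the blocks $\tau_1,\dots,\tau_m$, and then to invoke Moran's theorem recalled in the Introduction, exactly as was done for $\mathbb S_{(P_s,u)}$ in the two preceding theorems. Write $E=\{x:x=\Delta^{P}_{\tau_{j_1}\tau_{j_2}\dots},\ j_k\in\{1,\dots,m\}\}$ and consider the decreasing sequence of closed sets $E_k=\bigcup_{(j_1,\dots,j_k)}\Delta^{P}_{\tau_{j_1}\dots\tau_{j_k}}$, so that $E=\bigcap_{k=1}^{\infty}E_k$. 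The engine of the whole argument is the self-affinity of the $P$-representation: for any block $\tau_j$ and any tail $\gamma_1\gamma_2\dots$,
$$
\Delta^{P}_{\tau_j\gamma_1\gamma_2\dots}=\Delta^{P}_{\tau_j000\dots}+\bigl|\Delta^{P}_{\tau_j}\bigr|\cdot\Delta^{P}_{\gamma_1\gamma_2\dots},
$$
which is the clean (unpadded) analogue of the identity used in the proof of Lemma~\ref{lm: Lemma on cylinders}.

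First I would read off the contractions. The displayed identity shows that the part of $E$ inside $\Delta^{P}_{\tau_j}$ is the image of all of $E$ under the similarity $x\mapsto\Delta^{P}_{\tau_j000\dots}+|\Delta^{P}_{\tau_j}|\,x$, so
$$
E=\bigcup_{j=1}^{m}\Bigl(\Delta^{P}_{\tau_j000\dots}+\sigma_j\,E\Bigr),\qquad \sigma_j:=\bigl|\Delta^{P}_{\tau_j}\bigr|.
$$
By property~(3) of the $P$-cylinders the length $|\Delta^{P}_{\tau_j}|$ equals the product of the probabilities of the digits of $\tau_j$, i.e. $\sigma_j=\prod_{i=0}^{s-1}p_i^{N_i(\tau_j)}$, which is precisely the base of the $j$-th term in the asserted equation. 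Since all $p_i\in(0,1)$ and every block is non-empty, $\sigma_j\in(0,1)$, so these are genuine contractions; hence $g(\alpha)=\sum_{j=1}^{m}\sigma_j^{\alpha}$ is continuous and strictly decreasing from $g(0)=m\ge1$ to $0$, giving a unique root $\alpha_0$ of $g(\alpha_0)=1$.

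Next I would verify the Moran hypotheses for the basic sets $\Delta^{P}_{\tau_{j_1}\dots\tau_{j_k}}$, following the pattern of the previous two proofs. They are closed intervals and are nested, their diameters tend to $0$ (each block has length $\ge1$, so $\sigma_j\le\max_i p_i<1$), and by iterating the self-affinity the sub-cylinder $\Delta^{P}_{\tau_{j_1}\dots\tau_{j_k}\tau_j}$ is geometrically similar to $\Delta^{P}_{\tau_{j_1}\dots\tau_{j_k}}$ with ratio $\sigma_j$. Compactness of $E$ is immediate ($E\subset[0,1]$ is an intersection of finite unions of closed intervals), and perfectness and nowhere-density follow from the endpoint argument already used for $\mathbb S_{(P_s,u)}$. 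Granting the separation condition below, Moran's theorem (equivalently, the equality of the self-similar dimension and the Hausdorff dimension for a compact self-similar subset of $\mathbb R^1$ invoked in the preceding theorem) yields that the Hausdorff dimension of $E$ is the root $\alpha_0$ of $\sum_{j=1}^{m}\sigma_j^{\alpha_0}=1$, which is the claim.

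I expect the only delicate point to be the non-overlap (disjoint-interior) requirement for the cylinders $\Delta^{P}_{\tau_1},\dots,\Delta^{P}_{\tau_m}$, on which Moran's theorem rests. Two $P$-cylinders have disjoint interiors precisely when neither base string is a prefix of the other, so the construction is clean as soon as the family $\{\tau_1,\dots,\tau_m\}$ is prefix-free (in particular whenever all blocks share a common length); then property~(8) on the adjacency of consecutive $P$-cylinders guarantees that the similarity images above meet at most at endpoints. I would therefore either impose this prefix-free hypothesis on the set of combinations or observe that the only points admitting two $P$-representations are the $P$-rational numbers, which form a countable set and hence cannot affect the Hausdorff dimension, so that any boundary coincidences are negligible.
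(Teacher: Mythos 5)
Your proposal is correct and follows essentially the same route as the paper: both exhibit $E$ as a self-similar set whose pieces $[I_{\tau_j}\cap E]$ are similar to $E$ with ratios $\omega_j=\prod^{s-1}_{i=0}p_i^{N_i(\tau_j)}$ read off from the cylinder lengths, and then invoke the equality of the self-similar and Hausdorff dimensions (Moran's theorem) to obtain $\sum_j\omega_j^{\alpha_0}=1$. The only difference is that you explicitly address the non-overlap/prefix-free separation condition, which the paper's proof silently assumes when it writes $E=[I_{\tau_1}\cap E]\cup\dots\cup[I_{\tau_m}\cap E]$.
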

\begin{proof} Let  $\{\tau_1, \tau_2,\dots,\tau_m\}$ be a set of fixed combinations of  digits from $A$ and the P-representation of any number from $E$ contains only such combinations of digits. 

It is easy to see that there exist combinations $\tau', \tau''$ from the set $\Xi=\{\tau_1, \tau_2,\dots,\tau_m\}$ such that 
$\Delta^P _{\tau^{'}\tau^{'}...}=\inf E$, $\Delta^P _{\tau^{''}\tau^{''}...}=\sup E$,
and
$$
d(E)=\sup E - \inf E=\Delta^P _{\tau^{''}\tau^{''}...}-\Delta^s _{\tau^{'}\tau^{'}...}.
$$

\emph{A cylinder $ \Delta^{(P,E)} _{\tau^{'} _1\tau^{'} _2\ldots\tau^{'} _n}$ of rank $n$ with base $\tau^{'} _1\tau^{'} _2\ldots\tau^{'} _n$} is a set  formed by all numbers of $E$ with the  P-representations in which the first $n$ combinations of digits are fixed and  coincide with $\tau^{'} _1,\tau^{'} _2,\dots,\tau^{'} _n$ respectively ($\tau^{'} _j\in \Xi$ for all $j=\overline{1,n}$).

It is easy to see that
$$
d( \Delta^{(P,E)} _{\tau^{'} _1\tau^{'} _2...\tau^{'} _n})=d(E)\cdot p^{N_0(\tau^{'} _1\tau^{'} _2...\tau^{'} _n)} _0p^{N_1(\tau^{'} _1\tau^{'} _2...\tau^{'} _n)} _1\cdots p^{N_{s-1}(\tau^{'} _1\tau^{'} _2...\tau^{'} _n)} _{s-1}, 
$$
where ${N_i(\tau^{'} _1\tau^{'} _2...\tau^{'} _n)}$ is a number of the digit $i\in A$ in $\tau^{'} _1\tau^{'} _2...\tau^{'} _n$.

Since  $E$ is a closed set, $ E \subset [\inf E, \sup E] $, and
$$
\frac{d\left( \Delta^{(P,E)} _{\tau^{'} _1\tau^{'} _2...\tau^{'} _n\tau^{'} _{n+1}}\right)}{d\left( \Delta^{(P,E)} _{\tau^{'} _1\tau^{'} _2...\tau^{'} _n}\right)}=\prod^{s-1} _{i=0}{p^{N_i(\tau^{'} _{n+1})} _i},
$$
$$
E=[I_{\tau_{1}} \cap E]\cup [I_{\tau_{2}} \cap E]\cup\ldots\cup[I_{\tau_m}\cap E],
$$
where
$I_{\tau_j}=[\inf \Delta^{(P,E)} _{\tau_j},\sup \Delta^{(P,E)} _{\tau_j}]$ and $j=1,2,\dots,m,$
we have
$$
 {[I_{\tau_j} \cap E]} \stackrel{\omega_j}{\sim}E~\text{for all}~j=\overline{1,m},
$$
where 
$$
\omega_j=\prod^{s-1} _{i=0}{p^{N_i(\tau_j)} _i}.
$$
This completes the proof.
\end{proof}

\end{document}